\shorttitle{Varentropy of past lifetimes} 
\numberwithin{equation}{section}  
\begin{document}

\title{Varentropy of past lifetimes} 

\authorone[Università degli Studi di Napoli Federico II]{Francesco Buono} 
\addressone{Dipartimento di Matematica e Applicazioni "Renato Caccioppoli", Università degli Studi di Napoli Federico II, Via Cintia, 80126, Naples, Italy} 
\authortwo[Università degli Studi di Napoli Federico II]{Maria Longobardi} 
\addresstwo{Dipartimento di Biologia, Università degli Studi di Napoli Federico II, Via Cintia, 80126, Naples, Italy} 

\begin{abstract}
In Reliability Theory, uncertainty is measured by the Shannon entropy. Recently, in order to analyze the variability of such measure, varentropy has been introduced and studied. In this paper we define a new concept of varentropy for past lifetime distributions as the variability of uncertainty in past lifetime. We analyze the relation of this varentropy with the corresponding past entropy and other concepts of interest in reliability and we study the effect of linear and monotonic transformations on it. Finally, we evaluate past varentropy for random variables following a proportional reversed hazard rate model.
\end{abstract}
\bigskip
\keywords{Past and inactivity lifetimes; Reversed hazard rate function; Varentropy; Proportional reversed hazards model} 

\ams{62N05}{60E15} 

\section{Introduction}

Let $X$ be an absolutely continuous random variable with support $D\subseteq(0,+\infty)$, describing the lifetime of a living being or of an item. Let $F$ be its cumulative distribution function (cdf), $\overline F$ its survival function (sf) and $f$ its probability density function (pdf).

In reliability, in information theory, in finance, in theoretical neurobiology and in several other fields, a well-known measure of uncertainty for $X$ is the differential entropy or Shannon information measure, defined as
\begin{equation*}
H(X)=\mathbb E[-\log f(X)]=-\int_0^{+\infty} f(x)\log f(x) \mathrm dx,
\end{equation*}
where $\log$ is the natural logarithm  (see Shannon \cite{shannon}\textbf{il}).

Ebrahimi \cite{ebrahimi} and Muliere et al. \cite{muliere} studied the residual entropy as measure of information for the residual lifetime $X_t=(X-t|X\ge t)$. The cdf of the residual lifetime is expressed as
\begin{equation*}
f_{X_t}(x)=\frac{f(x+t)}{\overline F(t)}, \ \ \ \ \ \ x>0,
\end{equation*}
whereas the residual entropy can be written as
\begin{equation*}
H(X_t)=-\int_t^{+\infty} \frac{f(x)}{\overline F(t)}\log \frac{f(x)}{\overline F(t)}\mathrm dx.
\end{equation*}

For all $t\in D$, the past lifetime of $X$ is $_tX=(X|X\leq t)$, with cdf and pdf expressed as
\begin{equation*}
F_{_tX}(x)=\frac{F(x)}{F(t)}, \ \ \ \ \ \ f_{_tX}(x)=\frac{f(x)}{F(t)} \ \ \ \ \ \ x\in(0,t)
\end{equation*}
and the inactivity time of $X$ is $X_{(t)}=(t-X|X\leq t)$, where its sf and pdf are 
\begin{equation*}
\overline F_{(t)}(x)=\frac{F(t-x)}{F(t)}, \ \ \ \ \ \ f_{(t)}(x)=\frac{f(t-x)}{F(t)} \ \ \ \ \ \ x\in(0,t)
\end{equation*}
(see Li and Zuo \cite{li} for details).

Di Crescenzo and Longobardi \cite{dicrescenzo} studied the past entropy to measure the uncertainty about the past lifetime. The past entropy is defined as
\begin{equation}
\label{eq5}
H(_tX)=\mathbb E[-\log f_{_tX}(_tX)]=-\int_0^t \frac{f(x)}{F(t)} \log \frac{f(x)}{F(t)} \mathrm dx.
\end{equation}

The mean inactivity time is a function of a great importance in the context of reliability theory. It is the mean value of $X_{(t)}$ and it is expressed as
\begin{equation}
\label{eq16}
m^*(t)=\mathbb E(X_{(t)})=\frac{1}{F(t)}\int_0^t F(t-x) \mathrm dx=\frac{1}{F(t)}\int_0^t F(x)\mathrm dx,
\end{equation}
(see Izadkhan \cite{izadkhan} for further details).

Notice that the past entropy is also the entropy of the inactivity time, that is, $H(_tX)=H(X_{(t)})$.

\noindent We define, for $t\in D$, the reversed hazard rate function of $X$ as 
\begin{equation}
\label{eq7}
q(t)=\lim_{\Delta t\to 0^+} \frac{1}{\Delta t}\mathbb P(X\ge t-\Delta t| X\leq t)=\frac{f(t)}{F(t)}.
\end{equation}
This function is the instantaneous failure rate occurring immediately before the time point $t$, i.e., the failure occurs just before the time point $t$, given that the unit has not survived longer than $t$ (see Block and Savits \cite{block} and Finkelstein \cite{finkelstein} for more details about the reversed hazard rate function).  Recall also that the cumulative reversed hazard rate function s defined as
\begin{equation}
\label{eq4}
\Lambda^*(t)=\int_t^{+\infty} q(x) \mathrm dx = -\log F(t).
\end{equation}
 
The past entropy can also be expressed in different ways (see \cite{dicrescenzo}):
\begin{eqnarray}
\label{eq1}
H(_tX)&=& -\Lambda^*(t)-\frac{1}{F(t)}\int_0^t f(x)\log f(x)\mathrm dx \\
&=& 1-\frac{1}{F(t)}\int_0^t f(x)\log q(x)\mathrm dx.
\end{eqnarray}
By differentiating both sides of \eqref{eq1}, we get the following expression for the derivative of the past entropy:
\begin{equation}
\label{eq8}
H'(_tX)=q(t)[1-H(_tX)-\log q(t)].
\end{equation}
In recent literature several new definitions of entropy of a continuos distribution have been proposed: weighted version and its residual and past ones (see \cite{DiLo2006}) and many different version of cumulative entropies and their applications (see for instance \cite{CaLoAh}, \cite{CaLoNa}, \cite{CaLoPsa}, \cite{DiLo2009a},  \cite{DiLo2009b}, \cite{DiLo2012}, \cite{DiLo2013} and \cite{Lo2014}).

We recall, also, that the distribution function, the reversed hazard rate function and the mean inactivity lifetimes are linked by the following relations:
\begin{align}
\label{eq11}
& F(t)=\exp\left[-\int_t^{+\infty} q(x) \mathrm dx \right] \\
& q(t)=\frac{1-(m^*(t))'}{m^*(t)} .
\end{align}

In Section 2, the definitions of varentropy and residual varentropy are recalled and a new varentropy of past lifetime distributions is defined. Moreover, some examples of past varentropy are exhibited and some relations with other important concepts of reliability theory are given. The behaviour of such a measure under certain transformations is also described.

In Section 3, we give some bounds for past varentropy. Finally, we consider the proportional reversed hazard rate model and evaluate the past varentropy of the family of random variables following this model.

\section{Past varentropy}

The differential entropy can be seen as expectation of the information contained in $X$. 
In order to measure the concentration of this information content around the entropy, recently a new definition called varentropy was given (see also Bobkov and Madiman \cite{bobkov})). The varentropy is defined as
\begin{eqnarray}
\label{eq3}
V(X)&=&\mathrm{Var}(-\log f(X))= \mathrm{Var}(\log f(X))=\mathbb E\left[(\log f(X))^2\right]-[H(X)]^2 \\
&=&\int_0^{+\infty} f(x)[\log f(x)]^2 \mathrm dx-\left[\int_0^{+\infty} f(x)\log f(x)\mathrm dx\right]^2.
\end{eqnarray}
In the literature there are several papers about the study of the varentropy and its properties, we may refer to Arikan \cite{arikan} and Madiman and Wang \cite{madiman}.
In the discrete case, we can write the entropy and the varentropy of a random variable $X$ with support  $\{x_i, i \in I\}$ and discrete density function $p_X$, as 
$$H(X)=-\sum_{i \in I} p_X(x_i) \log p_X(x_i)$$
and
$$V(X)=\sum_{i \in I} p_X(x_i) [\log p_X(x_i)]^2- [H(X)]^2,$$
respectively.

Di Crescenzo and Paolillo \cite{dicrescpao} extended the definition of the varentropy \eqref{eq3} to the residual lifetimes introducing the residual varentropy that is
\begin{eqnarray*}
V(X_t)&=&\mathrm{Var}(-\log f_{X_t}(X_t))= \mathrm{Var}(\log f_{X_t}(X_t))=\mathbb E\left[(\log f_{X_t}(X_t))^2\right]-[H(X_t)]^2 \\
&=&\int_t^{+\infty} \frac{f(x)}{\overline F(t)}\left[\log \frac{f(x)}{\overline F(t)}\right]^2 \mathrm dx-\left[\int_t^{+\infty} \frac{f(x)}{\overline F(t)}\log \frac{f(x)}{\overline F(t)}\mathrm dx\right]^2 \\
&=& \frac{1}{\overline F(t)}\int_t^{+\infty} f(x)(\log f(x))^2 \mathrm dx - (\Lambda (t)+H(X_t))^2,
\end{eqnarray*}
where, in analogy with \eqref{eq4}, $\Lambda(t)=-\log\overline F(t)$.

Recently, a great importance is given to the study of past lifetime distributions, because, in many realistic situation, it is reasonable that uncertainty is related to the past. Let us suppose to have a system observed at a certain time $t$ and then it is found failed. So the uncertainty is referred to the past and the concentration around it can be investigated.
For this reason, we introduce and study the past varentropy, i.e., the varentropy of the past lifetime. We define, for all $t\in D$, the past varentropy as
\begin{eqnarray}
\nonumber
V(_tX)&=&\mathrm{Var}(-\log f_{_tX}(_tX))= \mathrm{Var}(\log f_{_tX}(_tX))=\mathbb E\left[(\log f_{_tX}(_tX))^2\right]-[H(_tX)]^2 \\
\label{eq13}
&=&\int_0^t \frac{f(x)}{F(t)}\left[\log \frac{f(x)}{F(t)}\right]^2 \mathrm dx-\left[\int_0^t \frac{f(x)}{F(t)}\log \frac{f(x)}{F(t)}\mathrm dx\right]^2 \\
\label{eq6}
&=& \frac{1}{F(t)}\int_0^t f(x)(\log f(x))^2 \mathrm dx - (\Lambda^*(t)+H(_tX))^2,
\end{eqnarray}
where $\Lambda^*(t)$ and $H(_tX)$ are given in \eqref{eq4} and \eqref{eq5}, respectively.
We note that the past varentropy is also the varentropy of the inactivity time, that is, $V(_tX)=V(X_{(t)})$.

In the following example we evaluate the past entropy and the past varentropy for some distributions of interest in reliability theory.

\begin{ex}
\begin{itemize}
\item Let $X$ be a random variable with uniform distribution over $(0,b)$, i.e., $X\sim U(0,b)$, $b>0$. Hence, for $t\in (0,b)$ we have
\begin{align*}
&H(_tX)=\log t \\
&V(_tX)=0.
\end{align*}

\item Let $X$ be a random variable with exponential distribution, i.e., $X\sim Exp(\lambda)$, $\lambda>0$. Then, for $t>0$ we have
\begin{align*}
H(_tX)&=1+\log \left(\frac{1-\mathrm e^{-\lambda t}}{\lambda}\right)-\frac{\lambda t \mathrm e^{-\lambda t}}{1-\mathrm e^{-\lambda t}} \\
V(_tX)&=1-2\lambda^2\log \lambda+2\log \lambda\\
&+\frac{\mathrm e^{-\lambda t}}{1-\mathrm e^{-\lambda t}}\left[-\lambda^2 t^2+2\lambda^3t\log \lambda-2\lambda t\log \lambda-\frac{\lambda^2t^2\mathrm e^{-\lambda t}}{1-\mathrm e^{-\lambda t}}\right].
\end{align*}

\item Let $X$ be a random variable such that $f(x)=2x$ and $F(x)=x^2$, $x\in(0,1)$. Hence, for $t\in (0,1)$ we have
\begin{align*}
&H(_tX)=\frac{1}{2}+\log \frac{t}{2} \\
&V(_tX)=\frac{1}{4}+4\log2\log t+\log^2 t^2-2\log 2t\log t^2.
\end{align*}
\end{itemize}
\end{ex}

In the next proposition, we obtain an expression for the derivative of the past varentropy.

\begin{prop}
\label{prop1}
For all $t\in D$, the derivative of the past varentropy is
\begin{equation*}
V'(_tX)=-q(t)\left[V(_tX)-(H(_tX)+\log q(t))^2\right].
\end{equation*}
\end{prop}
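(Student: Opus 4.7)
The plan is to differentiate the representation of $V(_tX)$ given in equation \eqref{eq6}, namely
\[
V(_tX)=\frac{1}{F(t)}\int_0^t f(x)(\log f(x))^2\,\mathrm dx-\bigl(\Lambda^*(t)+H(_tX)\bigr)^2,
\]
and then massage the result using the existing formula \eqref{eq8} for $H'(_tX)$ together with the identity $\log q(t)=\log f(t)-\log F(t)=\log f(t)+\Lambda^*(t)$, which rewrites $\log f(t)=\log q(t)-\Lambda^*(t)$ (this is the key algebraic bridge).

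First I would set $A(t):=F(t)^{-1}\int_0^t f(x)(\log f(x))^2\,\mathrm dx$ and $B(t):=\Lambda^*(t)+H(_tX)$, so $V(_tX)=A(t)-B(t)^2$. By the quotient rule and the Leibniz rule,
\[
A'(t)=-q(t)\,A(t)+q(t)(\log f(t))^2,
\]
since $(\Lambda^*)'(t)=-q(t)$ up to sign and $F'(t)/F(t)=q(t)$. For $B(t)$, using \eqref{eq8},
\[
B'(t)=-q(t)+q(t)\bigl[1-H(_tX)-\log q(t)\bigr]=-q(t)\bigl[H(_tX)+\log q(t)\bigr].
\]

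Next I would combine $V'(_tX)=A'(t)-2B(t)B'(t)$, factor $q(t)$, and write $A(t)=V(_tX)+B(t)^2$ to obtain
\[
V'(_tX)=-q(t)V(_tX)+q(t)\Bigl[(\log f(t))^2-B(t)^2+2B(t)\bigl(H(_tX)+\log q(t)\bigr)\Bigr].
\]
The remaining task is to verify that the bracketed expression equals $(H(_tX)+\log q(t))^2$. Setting $C:=H(_tX)+\log q(t)$, this reduces to checking $(\log f(t))^2=(C-B(t))^2$; but $C-B(t)=\log q(t)-\Lambda^*(t)=\log f(t)$ by the identity noted above, so the equality is immediate.

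The main obstacle is purely bookkeeping: keeping the three quantities $H(_tX)$, $\Lambda^*(t)$ and $\log q(t)$ straight and spotting the identity $\log q(t)-\Lambda^*(t)=\log f(t)$, which collapses what looks like a messy polynomial in $B(t)$ and $C$ into a single perfect square. Once that identity is invoked the rest is routine algebra, and the desired expression $V'(_tX)=-q(t)[V(_tX)-(H(_tX)+\log q(t))^2]$ follows at once.
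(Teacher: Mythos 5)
Your proposal is correct and follows essentially the same route as the paper: differentiating the representation $V(_tX)=\frac{1}{F(t)}\int_0^t f(x)(\log f(x))^2\,\mathrm dx-(\Lambda^*(t)+H(_tX))^2$, invoking \eqref{eq8} for $H'(_tX)$, and simplifying via $\log q(t)-\Lambda^*(t)=\log f(t)$. Your write-up in fact makes explicit the ``some calculations'' that the paper leaves to the reader, and the intermediate expression you reach coincides with the one displayed in the paper's proof.
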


\begin{proof}
By differentiating both sides of \eqref{eq6}, we get
\begin{eqnarray}
\nonumber
V'(_tX)&=&=\frac{q(t)}{F(t)}\int_0^t f(x)(\log f(x))^2\mathrm dx+q(t)(\log f(t))^2\\
\label{eq9}
&=&-2(\Lambda^*(t)+H(_tX))(-q(t)+H'(_tX)),
\end{eqnarray}
where $q(t)$ is defined in \eqref{eq7}. Hence, recalling \eqref{eq8} and \eqref{eq6}, from \eqref{eq9} we get
\begin{eqnarray*}
V'(_tX)&=&-q(t)\left[V(_tX)+(\Lambda^*(t)+H(_tX))^2-(\log f(t))^2 \right. \\
& &\left. -2(\Lambda^*(t)+H(_tX))(H(_tX)+\log q(t))\right],
\end{eqnarray*}
and, after some calculations, we get the thesis.
\end{proof}

It is useful to study the monotonicity of this function and to establish if, under some conditions, the past varentropy can be constant.
 
\begin{thm}
(i) Let the past varentropy $V(_tX)$ be constant, i.e., $V(_tX)=v\ge0$, for all $t\in D$. Then
\begin{equation*}
|H(_tX)+\log q(t)|=\sqrt v, \ \ \ \ \ \forall t\in D.
\end{equation*}

\noindent (ii) Let $c\in\mathbb R$. If 
\begin{equation}
\label{eq2}
H(_tX)+\log q(t)=c \ \ \ \ \ \forall t\in D
\end{equation}
then 
\begin{equation*}
V(_tX)=c^2+\frac{V(X)-c^2}{F(t)}\ \ \ \ \ \forall t\in D.
\end{equation*}
\end{thm}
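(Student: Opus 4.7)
The plan is to read both parts straight off the ODE in Proposition \ref{prop1}, namely
\[
V'(_tX)=-q(t)\bigl[V(_tX)-(H(_tX)+\log q(t))^2\bigr].
\]

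For (i), if $V(_tX)\equiv v$ on $D$, then $V'(_tX)\equiv 0$. Because $q(t)=f(t)/F(t)>0$ throughout $D$, the bracket must vanish identically, so $(H(_tX)+\log q(t))^2=v$, which is the claimed identity. This part is essentially a one-line consequence of Proposition \ref{prop1}.

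For (ii), I would substitute the hypothesis $H(_tX)+\log q(t)=c$ into the ODE to reduce it to the first-order linear equation
\[
V'(_tX)+q(t)\,V(_tX)=c^{2}q(t).
\]
Because of \eqref{eq4}, the natural integrating factor is $F(t)$: differentiating $F(t)V(_tX)$ gives $f(t)V(_tX)+F(t)V'(_tX)=F(t)\bigl[q(t)V(_tX)+V'(_tX)\bigr]=c^{2}f(t)$. Integrating this identity from $t$ up to the right endpoint of $D$ and using $F\to 1$ together with $V(_tX)\to V(X)$ at that endpoint (which follows since $_tX$ converges in distribution to $X$), I obtain $V(X)-F(t)V(_tX)=c^{2}(1-F(t))$, which rearranges to the stated formula $V(_tX)=c^{2}+(V(X)-c^{2})/F(t)$.

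The only real issue is justifying the boundary value $V(_tX)\to V(X)$ as $t$ approaches the upper end of $D$; this is the point I would want to state carefully, although it is immediate from the definition of the past lifetime distribution and dominated convergence applied to \eqref{eq13}. If one preferred to avoid invoking the limit, the same conclusion can be reached by noting that $W(t):=V(_tX)-c^{2}$ satisfies $W'(t)=-q(t)W(t)$, whose solutions are $W(t)=K/F(t)$, and then determining $K$ by evaluating at the right endpoint (or at any fixed point where $V(_tX)$ is known). Either way, the computation is routine once the linear ODE is in hand, and Proposition \ref{prop1} is doing all the heavy lifting.
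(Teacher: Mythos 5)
Your proposal is correct and follows essentially the same route as the paper: part (i) is read off Proposition \ref{prop1} using the positivity of $q$, and part (ii) reduces \eqref{eq2} to the linear ODE $V'(_tX)=-q(t)(V(_tX)-c^2)$ with boundary condition $\lim_{t\to\sup D}V(_tX)=V(X)$. Your explicit integration via the factor $F(t)$ simply fills in the "solving the ODE" step that the paper leaves to the reader.
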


\begin{proof}
$(i)$ From Proposition \ref{prop1}, we get that if the past varentropy is constant, then
\begin{equation*}
(H(_tX)+\log q(t))^2=v\ge0,
\end{equation*}
and so
\begin{equation*}
|H(_tX)+\log q(t)|=\sqrt v, \ \ \ \ \ \forall t\in D.
\end{equation*}

\noindent $(ii)$ From Proposition \ref{prop1} and from the assumption \eqref{eq2}, we get
\begin{equation*}
V'(_tX)=-q(t)(V(_tX)-c^2)
\end{equation*}
with the boundary condition
\begin{equation*}
\lim_{t\to \sup D} V(_tX)=V(X).
\end{equation*}
Solving the above linear ordinary differential equation we get the thesis.
\end{proof}

\begin{rem}
We refer to Corollary 2.2 of Kunda et al. \cite{kunda} for distributions which satisfy the relation expressed in \eqref{eq2}.
\end{rem}

Recall that the generalized reversed hazard rate is defined, for $\alpha\in\mathbb R,$ as
\begin{equation}
\label{eq10}
q_{\alpha}(t)=\frac{f(t)}{[F(t)]^{1-\alpha}}, \ \ \ \ \ t\in D
\end{equation}
(see Buono et al. \cite{buono}).
Notice that, by choosing $\alpha=0$ in \eqref{eq10}, we get $q_0(t)=q(t)$, i.e., $q_0$ is the usual reversed hazard rate function. In the following theorem, we study a necessary and sufficient condition in terms of past entropy, under which the generalized reversed hazard rate function is constant.

\begin{thm}
Let $X$ be an absolutely continuous random variable with support $D$ and let $c\in\mathbb R$. The generalized reversed hazard rate function of $X$ with parameter $1-c$, that is, $q_{1-c}(t)$, is constant, such that 
$$q_{1-c}(t)=\frac{f(t)}{[F(t)]^c}=\mathrm e^{c-H(X)}, \;\forall t\in D$$
 if, and olnly if,  \eqref{eq2} holds.
\end{thm}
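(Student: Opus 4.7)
My plan is to reduce both directions of the equivalence to the single statement that the auxiliary function $g(t) := \log f(t) - c\log F(t)$ is constant on $D$. The starting move is to rewrite $H(_tX) + \log q(t)$ in a clean form. Using \eqref{eq1} together with $\Lambda^*(t) = -\log F(t)$ to get $H(_tX) = \log F(t) - F(t)^{-1}\int_0^t f(x)\log f(x)\,\mathrm{d}x$, and adding $\log q(t) = \log f(t) - \log F(t)$, I arrive at the workhorse identity
$$H(_tX) + \log q(t) \;=\; \log f(t) - \frac{1}{F(t)}\int_0^t f(x)\log f(x)\,\mathrm{d}x.$$

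For the direction ($\Leftarrow$), assuming \eqref{eq2} I multiply this identity by $F(t)$ and differentiate in $t$. The $f(t)\log f(t)$ contributions cancel and what remains is $F(t)(\log f)'(t) = c\,f(t)$, equivalently $(\log f)'(t) = c\,q(t) = c(\log F)'(t)$. Hence $g \equiv k$ for some constant, which says exactly that $q_{1-c}(t) = e^k$ on $D$. To identify $k$, I substitute $f = e^k F^c$ into the definition of $H(X)$; the change of variable $u = F(x)$ reduces $\int_D f(x)\log F(x)\,\mathrm{d}x$ to $\int_0^1 \log u\,\mathrm{d}u = -1$, yielding $H(X) = c - k$ and therefore $k = c - H(X)$.

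For the direction ($\Rightarrow$), I start from $f(x) = e^{c-H(X)}[F(x)]^c$. Using the same substitution, $\int_0^t f(x)\log f(x)\,\mathrm{d}x = F(t)(c-H(X)) + c[F(t)\log F(t) - F(t)]$. Plugging this together with $\log f(t) = (c-H(X)) + c\log F(t)$ into the workhorse identity makes every $t$-dependent term cancel, leaving $H(_tX) + \log q(t) = c$.

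The only mildly delicate point is the change of variable $u = F(x)$ and the vanishing of the boundary contribution $u\log u$ as $u \to 0^+$; apart from that, the proof is a chain of algebraic cancellations organized around the auxiliary function $g$. The main conceptual obstacle is really noticing the workhorse identity in the first place, because \eqref{eq2} is stated in terms of $H(_tX)$ and $\log q(t)$ separately rather than in their collapsed form, which hides the fact that the condition is simply a first-order ODE for $\log f$ in terms of $\log F$.
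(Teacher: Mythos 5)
Your proof is correct and follows essentially the same route as the paper: your ``workhorse identity'' is literally the first display of the paper's own proof, and your ($\Rightarrow$) direction reproduces the paper's computation of $\int f(x)\log f(x)\,\mathrm dx$ under the explicit form $f=\mathrm e^{c-H(X)}F^c$. The only divergence is in the converse direction, where the paper turns \eqref{eq2} into the ODE $H'(_tX)=(1-c)q(t)$ via \eqref{eq8} and integrates up to $\sup D$, while you differentiate the identity directly to obtain $(\log f)'=c(\log F)'$ and then fix the constant by evaluating $H(X)$ — an equivalent one-line integration, with the same (harmless) implicit smoothness assumptions the paper already makes.
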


\begin{proof}
Let us suppose that $q_{1-c}(t)=\mathrm e^{c-H(X)}, \forall t\in D$. From \eqref{eq7} and \eqref{eq1} we get
\begin{eqnarray*}
H(_tX)+\log q(t)&=&\log f(t)-\frac{1}{F(t)}\int_0^t f(x)\log f(x)\mathrm dx \\
&=&\log f(t)+\frac{1}{F(t)}\left[H(X)+\int_t^{+\infty} f(x)\log f(x)\mathrm dx\right].
\end{eqnarray*}
From the hypothesis, we get
\begin{equation*}
\int_t^{+\infty} f(x)\log f(x)\mathrm dx=-H(X)\overline F(t)-cF(t)\log F(t),
\end{equation*}
and so
\begin{equation*}
H(_tX)+\log q(t)=H(X)+\log\frac{f(t)}{[F(t)]^c}=c,
\end{equation*}
i.e., the thesis.

\noindent Conversely, from \eqref{eq8} we obtain
\begin{equation*}
c=H(_tX)+\log q(t)=1-\frac{H'(_tX)}{q(t)},
\end{equation*}
and so
\begin{equation*}
H'(_tX)=(1-c)q(t).
\end{equation*}
By integrating both sides from $t$ and $+\infty$ and recalling \eqref{eq11}, we get
\begin{equation*}
H(X)-H(_tX)=(1-c)\Lambda^*(t).
\end{equation*}
Hence,
\begin{eqnarray*}
c=H(_tX)+\log q(t)&=&H(X)+\log F(t)-\log (F(t))^c+\log q(t) \\
&=& H(X)+\log q_{1-c}(t),
\end{eqnarray*}
and we get the thesis.
\end{proof}

In the following proposition, we study the past varentropy under linear transformations. We recall that if 
\begin{equation*}
Y=aX+b, \ \ \ \ \ a>0, \ b\ge0,
\end{equation*}
then the past entropies of $X$ and $Y$ are connected by (see \cite{dicrescenzo})
\begin{equation}
\label{eq12}
H(_tY)=H\left(_{\frac{t-b}{a}}X\right)+\log a \ \ \ \ \ \forall t.
\end{equation}

\begin{prop}
Let $Y=aX+b$, with $a>0$ and $b\ge0$. Then, for their past varentropies, we have
\begin{equation}
V(_tY)=V\left(_{\frac{t-b}{a}}X\right), \ \ \ \ \forall t.
\end{equation}
\end{prop}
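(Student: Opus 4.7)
The plan is to exploit the fact that varentropy, being a variance, is invariant under additive shifts of the log-density, whereas a positive linear transformation $Y = aX+b$ only shifts $\log f$ by the constant $-\log a$. This mirrors the reasoning behind \eqref{eq12}, where the same shift appears as an additive constant in the past entropy.

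First I would record the elementary density relations: since $Y = aX+b$ with $a>0$, one has $f_Y(y) = \tfrac{1}{a} f_X\!\left(\tfrac{y-b}{a}\right)$ and $F_Y(t) = F_X\!\left(\tfrac{t-b}{a}\right)$. Dividing these yields
\begin{equation*}
f_{_tY}(y) = \frac{1}{a}\, f_{_{(t-b)/a}X}\!\left(\frac{y-b}{a}\right), \qquad y \in (0,t),
\end{equation*}
so that $\log f_{_tY}(y) = -\log a + \log f_{_{(t-b)/a}X}\!\left(\tfrac{y-b}{a}\right)$.

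Next I would note the distributional identity $_tY \stackrel{d}{=} a \cdot {_{(t-b)/a}X} + b$, which follows because conditioning $Y$ on $Y \leq t$ is equivalent, through the monotone map $y \mapsto (y-b)/a$, to conditioning $X$ on $X \leq (t-b)/a$. Substituting $_tY$ into the identity above gives the almost-sure equality
\begin{equation*}
\log f_{_tY}(_tY) = -\log a + \log f_{_{(t-b)/a}X}\!\bigl({_{(t-b)/a}X}\bigr).
\end{equation*}

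Finally, taking the variance of both sides and using that $\mathrm{Var}(c + W) = \mathrm{Var}(W)$ for any constant $c$, the term $-\log a$ drops out, yielding $V(_tY) = V\!\left(_{(t-b)/a}X\right)$. I do not anticipate a real obstacle here: the only point that requires any care is the distributional identification of $_tY$ with $a\cdot {_{(t-b)/a}X} + b$, which is a direct change of variables on the conditional distribution; once this is in place, the invariance of variance under translation delivers the conclusion immediately.
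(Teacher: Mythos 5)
Your proof is correct and rests on the same key observation as the paper's, namely that passing from $X$ to $Y=aX+b$ only shifts the conditional log-density by the constant $-\log a$. The paper carries this out by substituting into the integral formula \eqref{eq13} and expanding the two squares, whereas you package the same cancellation as translation-invariance of the variance via the distributional identity $_tY \stackrel{d}{=} a\cdot{_{(t-b)/a}X}+b$; this is a cleaner write-up of essentially the identical argument.
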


\begin{proof}
From $Y=aX+b$ we know that $F_Y(x)=F_X\left(\frac{x-b}{a}\right)$ and $f_Y(x)=\frac{1}{a}f_X\left(\frac{x-b}{a}\right)$. Hence, from \eqref{eq13} and \eqref{eq12}, we get
\begin{equation}
\label{eq14}
V(_tY)=\int_0^{\frac{t-b}{a}} \frac{f_X(x)}{F_X\left(\frac{t-b}{a}\right)}\left(\log \frac{\frac{1}{a}f_X\left(x\right)}{F_X\left(\frac{t-b}{a}\right)}\right)^2\mathrm dx-\left(H\left(_{\frac{t-b}{a}}X\right)+\log a\right)^2.
\end{equation}
By writing 
$$\log \frac{\frac{1}{a}f_X\left(x\right)}{F_X\left(\frac{t-b}{a}\right)}=\log \frac{f_X\left(x\right)}{F_X\left(\frac{t-b}{a}\right)}-\log a,$$
and developing the two squares in \eqref{eq14}, after some calculations we get the thesis.
\end{proof}

If $\phi$ is any continuous, differentiable and strictly monotonic function and 
\begin{equation*}
Y=\phi(X),
\end{equation*}
then the past entropies of $X$ and $Y$ are related, for all $t>0$, by
\begin{equation}
\label{eq20}
H(_tY)=\begin{cases}H\left(_{\phi^{-1}(t)}X\right)+\mathbb E[\log \phi'(X)|X<\phi^{-1}(t)]   , & \mbox{if } \phi \mbox{ is strictly increasing,}\\
H\left(X_{\phi^{-1}(t)}\right)+\mathbb E[\log (-\phi'(X))|X>\phi^{-1}(t)]  , & \mbox{if } \phi \mbox{ is strictly decreasing}
\end{cases}
\end{equation}
(see \cite{dicrescenzo}).
In the following result,  the past varentropy under monotonic transformations is studied.
\begin{prop}
Let $Y=\phi(X)$, where $\phi$ is a continuous, differentiable and strictly monotonic function. 
\begin{itemize} 
\item [(i)] If $\phi$ is strictly increasing, then
\begin{eqnarray}
\nonumber
V(_tY)&=&V\left(_{\phi^{-1}(t)}X\right)-2\mathbb E\left[\left. \log\frac{f_X(X)}{F_X(\phi^{-1}(t))}\log \phi'(X)\right|X<\phi^{-1}(t)\right] \\ 
\nonumber
& &+\mathrm{Var}[\log \phi'(X)|X<\phi^{-1}(t)]-2H\left(_{\phi^{-1}(t)}X\right)\mathbb[\log \phi'(X)|X<\phi^{-1}(t)].
\end{eqnarray}
\item[(ii)] If $\phi$ is strictly decreasing, then
\begin{eqnarray}
\nonumber
V(_tY)&=&V\left(X_{\phi^{-1}(t)}\right)-2\mathbb E\left[\left. \log\frac{f_X(X)}{\overline F_X(\phi^{-1}(t))}\log (-\phi'(X))\right|X>\phi^{-1}(t)\right] \\ \nonumber & &+\mathrm{Var}[\log (-\phi'(X))|X>\phi^{-1}(t)]-2H\left(X_{\phi^{-1}(t)}\right)\mathbb[\log (-\phi'(X))|X>\phi^{-1}(t)].
\end{eqnarray}
\end{itemize}
\end{prop}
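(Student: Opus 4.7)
The approach is to write $-\log f_{_tY}(_tY)$ as a sum of two random variables built on a common $X$-conditional distribution, then expand the variance via $\mathrm{Var}(A+B)=\mathrm{Var}(A)+\mathrm{Var}(B)+2\mathrm{Cov}(A,B)$ together with $\mathrm{Cov}(A,B)=\mathbb{E}[AB]-\mathbb{E}[A]\,\mathbb{E}[B]$, and to identify each resulting expectation with a quantity that already appears in the claim.

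For case (i), the density transformation under the strictly increasing $\phi$ yields $F_Y(y)=F_X(\phi^{-1}(y))$ and $f_Y(y)=f_X(\phi^{-1}(y))/\phi'(\phi^{-1}(y))$, so the past density reads $f_{_tY}(y)=f_X(\phi^{-1}(y))/[\phi'(\phi^{-1}(y))\,F_X(\phi^{-1}(t))]$ for $y\in(0,t)$. Setting $W=\phi^{-1}(Y)$, the conditional law of $W$ given $Y\le t$ coincides with that of $X$ given $X<\phi^{-1}(t)$, i.e.\ with $_{\phi^{-1}(t)}X$. Hence
\begin{equation*}
-\log f_{_tY}(_tY) \;=\; -\log\frac{f_X(W)}{F_X(\phi^{-1}(t))} \;+\; \log\phi'(W).
\end{equation*}
The variance of the first summand is $V(_{\phi^{-1}(t)}X)$ by \eqref{eq13}, the variance of the second is $\mathrm{Var}[\log\phi'(X)\mid X<\phi^{-1}(t)]$, and the mean of the first summand is $H(_{\phi^{-1}(t)}X)$ by \eqref{eq5}. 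Expanding $2\,\mathrm{Cov}(A,B)=2\,\mathbb{E}[AB]-2\,\mathbb{E}[A]\,\mathbb{E}[B]$ and collecting signs produces exactly the two remaining terms of the claim: the joint expectation of $\log(f_X/F_X(\phi^{-1}(t)))\cdot\log\phi'$, and the product $H(_{\phi^{-1}(t)}X)\cdot\mathbb{E}[\log\phi'(X)\mid X<\phi^{-1}(t)]$.

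For case (ii) the only substantive changes are that $F_Y(t)=\overline F_X(\phi^{-1}(t))$, the Jacobian delivers $-\phi'(\phi^{-1}(y))$ in the denominator, and the conditioning $Y\le t$ now corresponds to $X\ge\phi^{-1}(t)$. Running the same decomposition gives $-\log f_{_tY}(_tY)=-\log(f_X(W)/\overline F_X(\phi^{-1}(t)))+\log(-\phi'(W))$ with $W$ distributed as $X\mid X>\phi^{-1}(t)$. Here one uses translation invariance of the variance to identify $\mathrm{Var}[\log(f_X(X)/\overline F_X(\phi^{-1}(t)))\mid X>\phi^{-1}(t)]$ with $V(X_{\phi^{-1}(t)})$, and the analogous property for the mean to identify it with $H(X_{\phi^{-1}(t)})$; the same covariance expansion then closes the formula. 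The main obstacle is purely bookkeeping: tracking the signs (particularly the $-\phi'$ in the decreasing case) and expressing each conditional expectation in the notation of the past or residual lifetime so that the target formula is reached without leftover terms.
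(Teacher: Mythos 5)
Your proof is correct and follows essentially the same route as the paper's: both rest on the density transformation $f_{{}_tY}(y)=f_X(\phi^{-1}(y))/[\phi'(\phi^{-1}(y))F_X(\phi^{-1}(t))]$ and the resulting decomposition of $-\log f_{{}_tY}$ into the past (resp.\ residual) log-density term plus $\log\phi'$ (resp.\ $\log(-\phi')$). Packaging the expansion as $\mathrm{Var}(A+B)=\mathrm{Var}(A)+\mathrm{Var}(B)+2\,\mathrm{Cov}(A,B)$ is only a cosmetic difference from the paper's ``develop the two squares'' in the integral after the change of variable, and it correctly reproduces all four terms with the right signs in both cases.
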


\begin{proof}
Let us suppose $\phi$ strictly increasing. From $Y=\phi(X)$ we know that $F_Y(x)=F_X\left(\phi^{-1}(x)\right)$ and $f_Y(x)=\frac{f_X(\phi^{-1}(x))}{\phi'(\phi^{-1}(x))}$. Hence, from \eqref{eq13} and \eqref{eq20}, we get
\begin{eqnarray*}
V(_tY)&=&\int_0^{\phi^{-1}(t)} \frac{f_X(x)}{F_X\left(\phi^{-1}(t)\right)}\left(\log \frac{f_X\left(x\right)}{F_X\left(\phi^{-1}(t)\right)}-\log\phi'(x)\right)^2\mathrm dx \\
& &-\left[H\left(_{\phi^{-1}(t)}X\right)+\mathbb E[\log \phi'(X)|X<\phi^{-1}(t)]\right]^2.
\end{eqnarray*}
Hence, by developing the two squares in the previous equality, and observing that
\begin{eqnarray*}
\int_0^{\phi^{-1}(t)} \frac{f_X(x)}{F_X\left(\phi^{-1}(t)\right)}\left(\log \phi'(x)\right)^2\mathrm dx&-&\mathbb E^2[\log \phi'(X)|X<\phi^{-1}(t)]\\
&=&\mathrm{Var}[\log \phi'(X)|X<\phi^{-1}(t)],
\end{eqnarray*}
we obtain the result.

The proof of (ii) is similar.
\end{proof}

\section{Bounds for past varentropy}

We recall the definition of variance inactivity time function, that is the variance of the inactivity time $X_{(t)}$ and can be expressed as
\begin{eqnarray*}
(\sigma^2(t))^*=\mathrm {Var}(X_{(t)})=\mathrm{Var}(t-X|X\leq t)=\frac{2}{F(t)}\int_0^t \mathrm dy \int_0^y F(x) \mathrm dx -(m^*(t))^2,
\end{eqnarray*}
where $m^*(t)$ is defined in \eqref{eq16}. (For more properties of the variance inactivity time function, see Kandil et al. \cite{kandil}).

\begin{thm}
Let $X_{(t)}$ be the inactivity time at time $t$ of $X$, and let the mean inactivity time $m^*(t)$ and the variance inactivity time $(\sigma^2(t))^*$ be finite. Then, for all $t\in D$, 
\begin{equation*}
V(_tX)\ge (\sigma^2(t))^*\left[\mathbb E(\omega_{(t)}'(X_{(t)}))\right]^2,
\end{equation*}
where the function $\omega_{(t)}(x)$ is defined by
\begin{equation}
\label{eq18}
(\sigma^2(t))^* \omega_{(t)}(x) f_{X_{(t)}}(x)=\int_0^x (m^*(z)-z)f_{X_{(t)}}(z) \mathrm dz, \ \ \ x>0.
\end{equation}
\end{thm}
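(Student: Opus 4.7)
The plan is to recognize \eqref{eq18} as the Cacoullos--Papathanasiou $w$-function associated with the random variable $X_{(t)}$ (understanding the integrand as $(m^*(t)-z)f_{X_{(t)}}(z)$ with $m^*(t)=\mathbb{E}(X_{(t)})$), and then derive the bound via a Stein-type covariance identity combined with the Cauchy--Schwarz inequality, specialized to the score function $-\log f_{X_{(t)}}$.

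First I would prove the auxiliary identity
\begin{equation*}
(\sigma^2(t))^* \, \mathbb{E}\bigl[g'(X_{(t)})\,\omega_{(t)}(X_{(t)})\bigr] \;=\; \mathrm{Cov}\bigl(X_{(t)},\,g(X_{(t)})\bigr)
\end{equation*}
for any sufficiently regular function $g$ on $(0,t)$. This is a routine computation: multiply \eqref{eq18} by $g'(x)$, integrate over $(0,t)$, swap the order of integration via Fubini, and use $\mathbb{E}[m^*(t)-X_{(t)}]=0$ to eliminate the upper-endpoint contribution. A direct application of the Cauchy--Schwarz inequality then yields
\begin{equation*}
\mathrm{Var}\bigl(g(X_{(t)})\bigr) \;\ge\; (\sigma^2(t))^* \bigl[\mathbb{E}(g'(X_{(t)})\,\omega_{(t)}(X_{(t)}))\bigr]^2.
\end{equation*}

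To conclude, I would specialize to $g(x) = -\log f_{X_{(t)}}(x)$, so that $\mathrm{Var}(g(X_{(t)})) = V(X_{(t)}) = V(_tX)$ by the identification noted after \eqref{eq6}. Since $g'(x) = -f_{X_{(t)}}'(x)/f_{X_{(t)}}(x)$, a single integration by parts over $(0,t)$ converts the key factor into $\mathbb{E}[\omega_{(t)}'(X_{(t)})]$: the boundary term $\omega_{(t)}(x) f_{X_{(t)}}(x)$ vanishes at $x=0$ (empty integral in \eqref{eq18}) and at $x=t$ (since the full integral there equals $\mathbb{E}(m^*(t)-X_{(t)})=0$). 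Substituting back into the Cauchy--Schwarz bound yields the thesis.

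The only delicate point will be rigorously justifying the integration by parts, namely the vanishing of the boundary terms and the integrability of $f_{X_{(t)}}'\,\omega_{(t)}$; both requirements are however forced by the defining equation \eqref{eq18} together with the assumed finiteness of $m^*(t)$ and $(\sigma^2(t))^*$, so the obstacle is technical rather than conceptual.
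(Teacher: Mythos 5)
Your proposal is correct and follows essentially the same route as the paper: apply the Cacoullos--Papathanasiou variance bound to $X_{(t)}$ with $g(x)=-\log f_{X_{(t)}}(x)$, then use the defining relation \eqref{eq18} to replace the score term by $\mathbb E[\omega_{(t)}'(X_{(t)})]$ (the paper does this by differentiating \eqref{eq18} and using $\mathbb E[m^*(t)-X_{(t)}]=0$, which is the same computation as your integration by parts). The only difference is that you re-derive the Cacoullos--Papathanasiou inequality via the covariance identity and Cauchy--Schwarz, whereas the paper simply cites it, and you correctly read the integrand in \eqref{eq18} as $(m^*(t)-z)f_{X_{(t)}}(z)$.
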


\begin{proof}
First of all, notice that $V(_tX)=V(X_{(t)}).$  If $X$ is a random variable with pdf $f$, mean $\mu$ and variance $\sigma^2$, we have 
\begin{equation}
\label{eq17}
\mathrm{Var}[g(X)]\ge \sigma^2 \left[\mathbb E(\omega(X)g'(X))\right]^2,
\end{equation}
where $\omega(x)$ is defined by $\sigma^2 \omega(x)f(x)=\int_0^x (\mu-z)f(z)\mathrm dz$  (see Cacoullos and Papathanasiou \cite{cacoullos}). Hence, by choosing $X_{(t)}$ as the random variable in \eqref{eq17}, and  $g(x)=-\log f_{X_{(t)}}(x)$, we obtain
\begin{equation}
\label{eq19}
\mathrm{Var}(-\log f_{X_{(t)}}(X_{(t)}))=V(_tX)\ge (\sigma^2(t))^*\left[\mathbb E\left(\omega_{(t)}(X_{(t)})\frac{f_{X_{(t)}}'(X_{(t)})}{f_{X_{(t)}}(X_{(t)})}\right)\right]^2.
\end{equation}
By differentiating now both sides of \eqref{eq18}, we get
\begin{equation*}
\omega_{(t)}(x)\frac{f_{X_{(t)}}'(x)}{f_{X_{(t)}}(x)}=\frac{m^*(t)-x}{(\sigma^2(t))^*}-\omega_{(t)}'(x),
\end{equation*}
and then by \eqref{eq19}
\begin{eqnarray*}
V(_tX)&\ge& (\sigma^2(t))^*\left[\mathbb E\left(\frac{m^*(t)-X_{(t)}}{(\sigma^2(t))^*}-\omega_{(t)}'(X_{(t)})\right)\right]^2 \\
&=&(\sigma^2(t))^*\left[\mathbb E(\omega_{(t)}'(X_{(t)}))\right]^2.
\end{eqnarray*}
\end{proof}

\begin{thm}
Let $X$ be a non-negative random variable with support $D$ and log-concave pdf $f$. Then
\begin{equation*}
V(_tX)\leq 1 \ \ \ \ \ \mbox{ for all }\  t\in D.
\end{equation*}
\end{thm}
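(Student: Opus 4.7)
The plan is to reduce the bound to the known fact (due to Bobkov and Madiman \cite{bobkov}) that the varentropy of any log-concave density on $\mathbb{R}$ is at most $1$. So the entire argument boils down to showing that the density of the past lifetime $_tX$ inherits log-concavity from $f$.

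First, I would recall from \eqref{eq13} that
\begin{equation*}
V(_tX)=\mathrm{Var}(-\log f_{_tX}(_tX)),
\end{equation*}
where $f_{_tX}(x)=f(x)/F(t)$ on $(0,t)$. This density is just a normalized truncation of $f$ to the interval $(0,t)$: outside that interval it is zero, and inside $\log f_{_tX}(x)=\log f(x)-\log F(t)$. Since the additive constant $-\log F(t)$ does not affect concavity and the indicator of the convex set $(0,t)$ is log-concave (taking the convention $\log 0=-\infty$), the hypothesis that $f$ is log-concave implies that $f_{_tX}$ is log-concave on $\mathbb{R}$.

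Having established this, I would apply the Bobkov--Madiman bound: for every random variable $Z$ with a log-concave density $g$ on $\mathbb{R}$, one has $\mathrm{Var}(-\log g(Z))\le 1$. Applying this to $Z={}_tX$ and $g=f_{_tX}$ yields
\begin{equation*}
V(_tX)=\mathrm{Var}(-\log f_{_tX}(_tX))\le 1,
\end{equation*}
for every $t\in D$, which is the claim.

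The only non-routine point is the log-concavity of the truncated density; I would make explicit that truncation to a sub-interval preserves log-concavity (a convex restriction of a concave function remains concave), so no regularity beyond the stated hypotheses is needed. Everything else is a direct invocation of the Bobkov--Madiman inequality, for which \cite{bobkov} is already in the bibliography. One could equivalently phrase the argument on the inactivity side by noting $V(_tX)=V(X_{(t)})$ and that $f_{X_{(t)}}(x)=f(t-x)/F(t)$ is log-concave because $x\mapsto t-x$ is affine, but the truncation route is marginally shorter.
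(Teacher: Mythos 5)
Your argument is essentially identical to the paper's: both observe that truncation to $(0,t)$ preserves log-concavity of the density and then invoke the sharp one-dimensional varentropy bound $V(X)\le 1$ for log-concave densities. The only difference is the reference: the paper attributes the bound to Theorem 2.3 of Fradelizi et al.\ \cite{fradelizi} rather than to Bobkov and Madiman \cite{bobkov}, which is the more precise source for the sharp constant $1$ in dimension one; otherwise your proof is correct and complete.
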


\begin{proof}
Observe that if $f(x)$ is log-concave, then also $f_{_tX}(x)=\frac{f(x)}{F(t)}$ is log-concave. It follows from Theorem 2.3 of Fradelizi et al. \cite{fradelizi} that if $X$ has a log-concave pdf, then $V(X)\leq 1$ and so the proof follows from this remark.
\end{proof}

\section{Proportional reversed hazard rate model}

In this section we consider the reversed proportional hazard rate model, see Gupta and Gupta \cite{gupta} for details. 

The family of random variables $\{X^{(a)}: a>0\}$ follows a proportional reversed hazard rate model if there exists a non-negative random variable $X$  with cdf $F$ and pdf $f$ such that
\begin{equation}
\label{eq22}
F^{(a)}(t)=\mathbb P(X^{(a)}\leq t)=[F(t)]^a, \ f^{(a)}(t)=a [F(t)]^{(a-1)} f(t), \ \ \ t>0.
\end{equation}
We remark that the model takes the name from the fact that the reversed hazard rate functions of the random variables in the family are proportional to the reversed hazard rate function of $X$:
\begin{equation*}
q^{(a)}(t)=\frac{f^{(a)}(t)}{F^{(a)}(t)}=a\ \frac{f(t)}{F(t)}=a\ q(t).
\end{equation*}
Moreover,  the cumulative reversed hazard rate function can be expressed as
\begin{equation*}
\Lambda^{(a)^*}(t)=-\log F^{(a)}(t)=a\Lambda^*(t).
\end{equation*}
The proportional reversed hazard rate model finds applications with parallel systems. In fact, if we have a system composed by $n$ units in parallel and characterized by i.i.d. lifetimes $X_1,\dots,X_n$ with cdf $F(t)$, the lifetime of the system is given by $X^{(n)}=\max\{X_1,\dots,X_n\}$. Then, we have $F_{X^{(n)}}(t)=[F(t)]^n$, i.e., the system satisfies the proportional reversed hazard rate model \eqref{eq22} with $a=n$.

In order to evaluate the past varentropy of $X^{(a)}$, we determine the past entropy of $X^{(a)}$. This can be expressed as
\begin{eqnarray*}
H\left(_t X^{(a)}\right)&=&-\Lambda^{(a)^*}(t)-\frac{1}{[F(t)]^a} \int_0^t f^{(a)}(x) \log f^{(a)}(x) \mathrm dx \\
&=& -a\Lambda^*(t)-\frac{1}{[F(t)]^a}\int_0^{[F(t)]^a}\gamma(y;a) \mathrm dy,
\end{eqnarray*}
with the change of variable $y=[F(x)]^a$, and where $\gamma(y;a)=\log\left[a y^{1-1/a}f(F^{-1}(y^{1/a}))\right]$. Hence, we obtain the past varentropy of $X^{(a)}$ as
\begin{eqnarray*}
V\left(_t X^{(a)}\right)&=&\frac{1}{[F(t)]^a}\int_0^t f^{(a)}(x) (\log f^{(a)}(x))^2 \mathrm dx -\left[\frac{1}{[F(t)]^a} \int_0^t f^{(a)}(x) \log f^{(a)}(x) \mathrm dx\right]^2 \\
&=&\frac{1}{[F(t)]^a}\int_0^{[F(t)]^a} [\gamma(y;a)]^2 \mathrm dy -\left[\frac{1}{[F(t)]^a} \int_0^{[F(t)]^a} \gamma(y;a) \mathrm dy\right]^2 .
\end{eqnarray*}

\section*{Conclusions}
In this paper, we have introduced and studied the past varentropy. It is related to the past entropy, which is a measure of information about the past lifetime distribution. In particular, the past varentropy provides the variability of the information given by the past entropy. We have showed several properties of the past varentropy, such as its behaviour under linear or monotonic transformations and what happens if it is constant. In addition, we have given bounds for the past varentropy and an application to the proportional reversed hazard rate model.

\acks
Francesco Buono and Maria Longobardi are partially supported by the GNAMPA research group of INdAM (Istituto Nazionale di Alta Matematica) and MIUR-PRIN 2017, Project "Stochastic Models for Complex Systems" (No. 2017 JFFHSH).

\vspace{0.5cm}

\end{document}